\newtheorem{thm}{Theorem}%[section]
\newtheorem{prop}[thm]{Proposition}
\theoremstyle{definition}
\theoremstyle{remark}
\numberwithin{equation}{section}
\newcommand{\group}[1]{\mathfrak{#1}}
\newcommand{\Aut}{\operatorname{Aut}}
\newcommand{\hur}{H}
\newcommand{\mon}{\vec{H}}
\newcommand{\id}{\operatorname{id}}
\newcommand{\Walk}{\operatorname{Walk}}
\begin{document}

%%
%% The title of the paper goes here.  Edit to your title.
%%

\title[Monotone Hurwitz Generating Functions]
{On the Convergence of Monotone Hurwitz Generating Functions}
\keywords{Hurwitz numbers, generating functions, walks in graphs.}
\subjclass[2010]{05A05, 05A15}
%%
%% Now edit the following to give your name and address:
%% 

\author{I. P. Goulden}
\address{Department of Combinatorics \& Optimization,
University of Waterloo, Canada}
\email{ipgoulden@uwaterloo.ca}

\author{Mathieu Guay-Paquet}
\address{LACIM, Universit\'e du Qu\'ebec \`a Montr\'eal, Canada}
\email{mathieu.guaypaquet@lacim.ca}

\author{Jonathan Novak}
\address{Department of Mathematics, University of California, San Diego}
\email{jinovak@ucsd.edu}

%%%
%%% The following is for the abstract.  The abstract is optional and
%%% if not used just delete, or comment out, the following.
%%%

\begin{abstract}
	Monotone Hurwitz numbers were introduced by the 
	authors as a combinatorially natural desymmetrization of the 
	Hurwitz numbers studied in enumerative algebraic
	geometry.  Over the course of several papers, we developed the
	structural theory of monotone Hurwitz numbers and demonstrated
	that it is in many ways parallel to that of their classical counterparts. 
	In this note, we identify an 
	important difference between the monotone and classical worlds:
	fixed-genus generating functions for monotone double Hurwitz
	numbers are absolutely summable, whereas those for classical
	double Hurwitz numbers are not.  This property is crucial for 
	applications of monotone Hurwitz theory in analysis.
	We quantify the growth rate of monotone Hurwitz numbers in fixed
	genus by giving universal upper and lower bounds on the radii of 
	convergence of their generating functions.
\end{abstract}

%%
%%  LaTeX will not make the title for the paper unless told to do so.
%%  This is done by uncommenting the following.
%%
\maketitle

%%
%% LaTeX can automatically make a table of contents.  This is done by
%% uncommenting the following:
%%
%\tableofcontents

%%
%%  To enter text is easy.  Just type it.  A blank line starts a new
%%  paragraph. 
%%

\section{Introduction}
Let us identify the symmetric group $\group{S}(d)$ with its right Cayley graph, as generated
by the conjugacy class of transpositions.  Furthermore, let us equip $\group{S}(d)$ with the 
edge-labelling in which each edge corresponding to the transposition 
$(s\ t)$ is marked with $t$, the larger of the two numbers interchanged.  
A restricted version of this labelling was introduced by Stanley \cite{Stanley} as an EL-labelling of the 
noncrossing partition lattice $\operatorname{NC}(d)$ related to parking functions;
it was subsequently ported to the symmetric group by Biane \cite{Biane} using a 
natural embedding $\operatorname{NC}(d) \rightarrow \group{S}(d)$.
Figure \ref{fig:Cayley} shows $\group{S}(4)$ together with the Biane--Stanley
edge labelling: $2$-edges are drawn in blue, $3$-edges in yellow, and $4$-edges in red.
In general, we will refer to the edge labels of $\group{S}(d)$ as \emph{colours}.

\begin{figure}
	\includegraphics{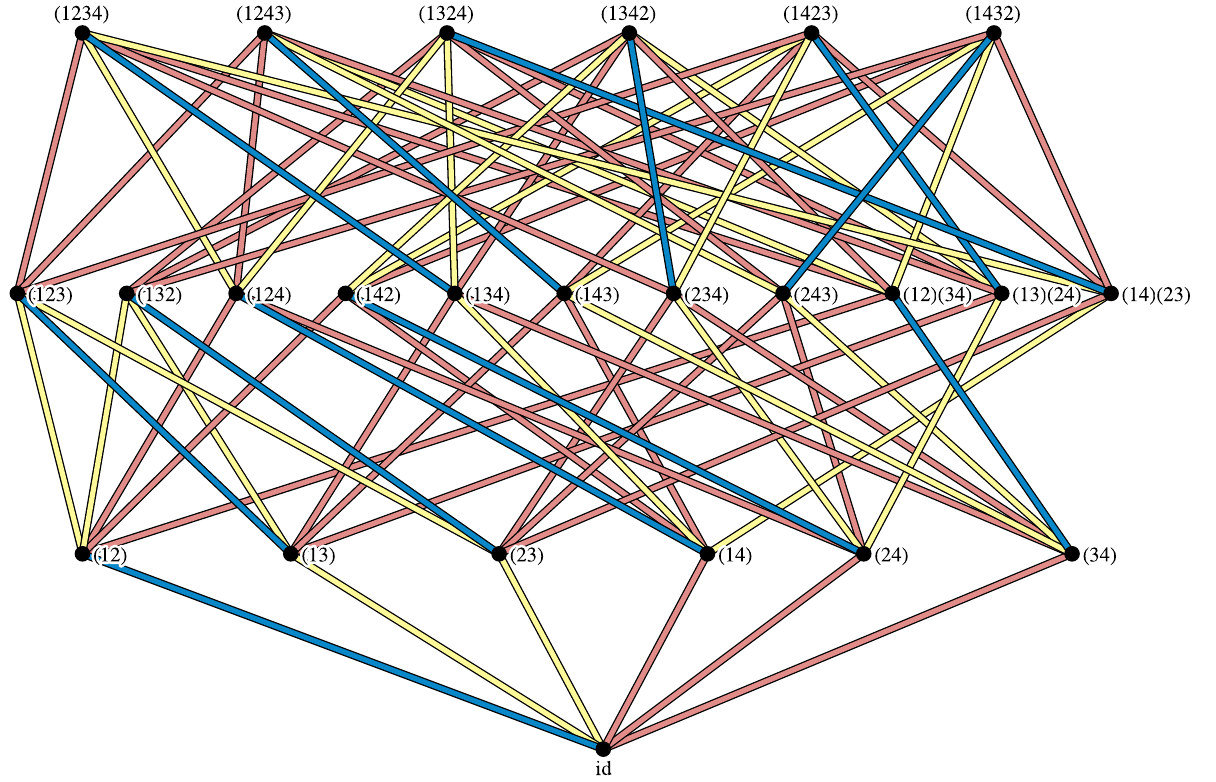}
	\caption{\label{fig:Cayley} $\group{S}(4)$ with the Biane--Stanley
	edge-labelling (figure by M. LaCroix).}
\end{figure}

A walk on $\group{S}(d)$ is said to be \emph{monotone} if the colours of the edges it traverses
form a weakly increasing sequence.  Once this notion has been introduced, a natural
question is to count monotone walks of a given length between two given permutations.  
It turns out that this question, which is of a purely combinatorial nature,
is closely connected to certain problems in analysis.  
In particular, it is known \cite{Novak:BCP} that, for any 
$\rho,\sigma \in \group{S}(d)$ and any $N \geq d$, one has 

	\begin{equation}
		\label{eqn:Weingarten}
		\int_{U(N)} u_{11} \dots u_{dd} 
		\overline{u_{1\rho^{-1}\sigma(1)} \dots u_{d\rho^{-1}\sigma(d)}} \mathrm{d}U
		= \frac{1}{N^d} \sum_{r=0}^{\infty} (-1)^r \frac{\vec{w}^r(\rho,\sigma)}{N^r},
	\end{equation}
	
\noindent
where the integration is against the Haar probability measure on the compact 
group of $N \times N$ complex unitary matrices $U=[u_{ij}]_{i,j=1}^N$, and 
$\vec{w}^r(\rho,\sigma)$ is the number of $r$-step monotone walks on $\group{S}(d)$
from $\rho$ to $\sigma$.  It is a non-obvious fact that the number $\vec{w}^r(\rho,\sigma)$,
and hence the integral in \eqref{eqn:Weingarten}, depends on the permutations
$\rho$ and $\sigma$ only through the cycle type of $\rho^{-1}\sigma$.  
Moreover, as explained in \cite{MN}, 
the formula \eqref{eqn:Weingarten} implies that the computation
of inner products of homogeneous polynomial functions of
degree $d \leq N$ in the Hilbert space $L^2(U(N),\operatorname{Haar})$
is equivalent to the enumeration of monotone walks on $\group{S}(d)$.  The computation 
of such inner products is important in various parts of mathematical physics, including
lattice quantum chromodynamics \cite{DH,Samuel} and string theory \cite{GT}, 
and from this perspective 
\eqref{eqn:Weingarten} serves as a kind of Feynman diagram expansion.
Polynomial integrals on $U(N)$ are also ubiquitous in random matrix
theory, where in the course of various moment computations it is necessary to ``integrate 
out'' eigenvector data to get at the eigenvalue distribution of a unitarily invariant random matrix.

Another perspective on the problem of enumerating monotone walks on $\group{S}(d)$
is to consider it as a desymmetrization of the \emph{double Hurwitz problem} from enumerative
algebraic geometry.  Given two partitions $\alpha,\beta \vdash d$, and a nonnegative integer
$g \geq 0$, let $\hur_g(\alpha,\beta)$ denote the total number of $(2g-2+\ell(\alpha)+\ell(\beta))$-step
walks on $\group{S}(d)$, not necessarily monotone,
which begin at a permutation of cycle type $\alpha$, end
at a permutation of cycle type $\beta$, and have the property that their steps
and endpoints together generate a transitive subgroup of $\group{S}(d)$.
The numbers $\hur_g(\alpha,\beta)$
are known as the \emph{double Hurwitz numbers} because, 
by a classical construction due to Hurwitz \cite{Hurwitz}, $\frac{1}{d!}\hur_g(\alpha,\beta)$
is a weighted count of degree $d$ branched covers of the Riemann sphere $\mathbf{P}^1$ by a
genus $g$ compact, connected Riemann surface which have ramification profile $\alpha$ over $\infty$, 
$\beta$ over $0$, and the simplest non-trivial branching over $2g-2+\ell(\alpha)+\ell(\beta)$
additional specified points of $\mathbf{P}^1$, the number of which is determined by the 
Riemann--Hurwitz formula.  It is convenient to introduce the notation 

	\begin{equation*}
		r_g(\alpha,\beta) = 2g-2+\ell(\alpha)+\ell(\beta)
	\end{equation*}
	
\noindent
for this number.

The double Hurwitz numbers have been objects of intense study in 
both algebraic combinatorics and algebraic geometry, see e.g.~\cite{GJV,OP1,OP2} and references
therein, and it is known that they exhibit a wide array of subtle structural properties.
In \cite{GGN1,GGN2,GGN3}, we introduced and studied the \emph{monotone double Hurwitz numbers}
$\mon_g(\alpha,\beta)$, which count the same walks as the double Hurwitz numbers
$\hur_g(\alpha,\beta)$, but with the monotonicity constraint imposed.  In \cite{GGN1,GGN2},
we demonstrated that the monotone double Hurwitz numbers enjoy a high degree of structural
similarity with the classical double Hurwitz numbers, and in \cite{GGN3} we applied this
structure to the asymptotic analysis of unitary matrix integrals via \eqref{eqn:Weingarten}.
While our analysis was purely combinatorial,
geometric explanations of the concordance between monotone and 
classical Hurwitz theory are gradually appearing, see e.g. \cite{DDM,DK}.

The present note is not about structural analogies between monotone and classical Hurwitz numbers,
but rather about an important quantitative difference between the two: growth rate.
Obviously, the monotone Hurwitz number $\mon_g(\alpha,\beta)$ is smaller
than its classical counterpart $\hur_g(\alpha,\beta)$ --- but how much smaller?
While explicit formulas for Hurwitz numbers are few and far between (see the 
introduction of \cite{GGN2} for a summary of known formulas), 
the elementary genus zero formulas

	\begin{equation}
		\label{eqn:ElementaryFormulas}
		\frac{1}{d!}\mon_0(1^d,d) = \frac{1}{d^2} \binom{2d-2}{d-1},
		\qquad \frac{1}{d!}\hur_0(1^d,d) = d^{d-3},
	\end{equation}
	
\noindent
already reveal a stark distinction: the classical
Hurwitz number grows superexponentially in the degree $d$, 
whereas its monotone analogue exhibits only exponential growth.  
This sharp drop in growth rate plays an important
role in analytic applications of monotone Hurwitz theory \cite{GGN3,Novak:JSP},
where one needs to know that, for any fixed $g \geq 0$, the generating function

	\begin{equation}
		\label{eqn:FixedGenus}
		\vec{\mathbf{F}}_g(z) 
		= \sum_{d=1}^\infty \left( \sum_{\alpha,\beta \vdash d} \mon_g(\alpha,\beta)\right)
		\frac{z^d}{d!}
	\end{equation}
	
\noindent
enumerating all genus $g$ monotone transitive walks on $\group{S}(d)$
is absolutely summable.  In fact, one needs the stronger statement that the radius of convergence
of $\vec{\mathbf{F}}_g(z)$ is bounded below by a positive universal constant, so that one may consider 
$\{\vec{\mathbf{F}}_g(z)\}$ as a family of 
holomorphic functions defined on a common complex domain.  The goal of this note is to give a 
detailed proof of a precise form of this growth property.

	\begin{thm}
	\label{thm:Main}
		For any $g \geq 0$, the radius of convergence of the generating function
		$\vec{\mathbf{F}}_g(z)$ is at least $1/54$ and at most $2/27$.
	\end{thm}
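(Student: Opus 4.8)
The plan is to establish the two bounds by rather different means. For the upper bound on the radius I would first reduce to genus zero by a handle-insertion trick: for every $d\ge 2$ and $g\ge 0$ one has $\mon_g(\alpha,\beta)\ge\mon_0(\alpha,\beta)$, since a transitive genus-zero monotone walk from $\rho$ (of type $\alpha$) to $\sigma$ (of type $\beta$) with transpositions $\tau_1,\dots,\tau_{r_0(\alpha,\beta)}$ can be turned into a genus-$g$ one by prepending $2g$ copies of $(1\ 2)$; this carries the smallest colour, so the lengthened sequence is still weakly increasing, its product is unchanged, its length becomes $r_0(\alpha,\beta)+2g=r_g(\alpha,\beta)$, and both endpoint types are preserved, and the assignment is injective. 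Hence $[z^d]\vec{\mathbf{F}}_g\ge[z^d]\vec{\mathbf{F}}_0$ for $d\ge 2$, and it suffices to show that $\vec{\mathbf{F}}_0$ has radius of convergence at most $2/27$.

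For this, by nonnegativity of the monotone numbers, it is enough to bound $[z^d]\vec{\mathbf{F}}_0$ below. Specializing the identity below to genus zero, $\vec{\mathbf{F}}_0$ is itself algebraic and should satisfy a cubic functional equation $\phi=2z(1+\phi)^3$ (up to normalization), whose critical point $\phi=\tfrac12$ forces a square-root singularity at $z=2/27$; the same conclusion can be reached more explicitly from the genus-zero formulas of \cite{GGN1} applied to the slice $\beta=1^d$ (the monotone single Hurwitz numbers $\mon_0(\alpha,1^d)$ summed over $\alpha\vdash d$) together with a Lagrange-type change of variable, the cubic encoding the noncrossing/Catalan structure of minimal monotone factorizations already visible in the first formula of \eqref{eqn:ElementaryFormulas}. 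It follows that $[z^d]\vec{\mathbf{F}}_0\gtrsim(27/2)^d d^{-3/2}$, so the radius of $\vec{\mathbf{F}}_0$ is at most $2/27$.

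For the lower bound I cannot pass to genus zero and would instead work with an exact identity valid for all $g$ at once. Since symmetric functions of the Jucys--Murphy elements $J_2,\dots,J_d$ are central in $\field{C}[\group{S}(d)]$ and act on the irreducible $\group{S}(d)$-module $V_\lambda$ by the value of $h_r$ (the complete homogeneous symmetric polynomial) at the contents of $\lambda$, the disconnected generating function for all monotone walks --- with $z$ marking $d$, an auxiliary variable $t$ marking the walk length $r$, and $u,v$ marking $\ell(\alpha),\ell(\beta)$ --- is
\[
  \Mon^{\bullet}
  =\sum_{d\ge 1}z^{d}\sum_{\lambda\vdash d}
  \biggl(\prod_{\square\in\lambda}\frac{1}{1-t\,c(\square)}\biggr)
  \biggl(\prod_{\square\in\lambda}\frac{u+c(\square)}{h(\square)}\biggr)
  \biggl(\prod_{\square\in\lambda}\frac{v+c(\square)}{h(\square)}\biggr),
\]
where $c(\square)$ and $h(\square)$ are the content and hook of a box. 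The decisive point is that a monotone walk on $\group{S}(d)$ factors canonically over the orbits of the group it generates, with \emph{no} interleaving ambiguity (distinct orbits own disjoint colour sets), so the connected series is $\Mon^{\circ}=\log(1+\Mon^{\bullet})$, and $\vec{\mathbf{F}}_g(z)$ is obtained from $\Mon^{\circ}$ by setting $u=v=t^{-1}$ --- so that $t$ now marks $r-\ell(\alpha)-\ell(\beta)$ --- and reading off the coefficient of $t^{2g-2}$. It then remains to estimate coefficients: using $\prod_{\square}h(\square)^{-2}=(f^\lambda/d!)^2$ together with $\sum_{\lambda\vdash d}(f^\lambda)^2=d!$ to damp the content products, I would compare the outcome with a cubic $\phi=8z(1+\phi)^3$ of radius $1/54$, uniformly in $g$, obtaining $[z^d]\vec{\mathbf{F}}_g\le C_g\,54^{d}$ and hence radius at least $1/54$. (The multipliers $2$ and $8$ of the two comparison cubics differ by the factor $4=(2/27)/(1/54)$ that separates the two bounds in the theorem.)

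The hard part is that last estimate. The content product $\prod_{\square}(1-t\,c(\square))^{-1}$ has, for the single-row partition $(d)$, poles at $t=1/j$ with $j$ as large as $d-1$, so any bound made at a fixed real value of $t$ diverges with $d$; the estimate must instead be carried out coefficient-by-coefficient in $t$ --- equivalently, one genus at a time --- where the accompanying $(f^\lambda/d!)^2$ precisely absorbs this blow-up. Pushing that through the logarithm while simultaneously keeping the constant down to $54$ and staying uniform in $g$ is the real obstacle; the genus-zero upper bound, by comparison, is essentially a Lagrange-inversion exercise on the formulas of \cite{GGN1}.
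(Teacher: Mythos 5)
Your proposal establishes neither bound completely, and the more serious gap is in the lower bound, which is the actual content of the paper. The route you sketch through Jucys--Murphy elements and content/hook products ends at exactly the point where the theorem lives: you acknowledge that extracting a uniform-in-$g$ bound of order $54^d$ from $\log(1+\Mon^\bullet)$ is ``the real obstacle,'' and indeed it is --- the content product $\prod_{\square}(1-tc(\square))^{-1}$ for the one-row shape has poles accumulating at $t=1/(d-1)$, and nothing in your sketch controls the cancellation against $(f^\lambda/d!)^2$ after taking the logarithm and isolating the coefficient of a fixed power of $t$. The paper's argument is entirely combinatorial and sidesteps all of this. It proves
\begin{equation*}
\sum_{\alpha,\beta\vdash d}\mon_g(\alpha,\beta)\ \leq\ 4^{d}\,\mon_g(1^d,1^d)
\end{equation*}
by gluing to each monotone walk from $\rho$ (type $\alpha$) to $\sigma$ (type $\beta$) the \emph{unique} strictly monotone factorizations of $\rho$ and of $\sigma^{-1}$ (the Jucys / Diaconis--Greene fact), producing a transitive identity-based loop of length $2g-2+2d$, and then resorting that loop into a monotone one via a Coxeter-type ``sorting action'' $R_i$ on walks that commutes with permuting the spectrum. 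The only data lost is the pair of colour sets $C^\rho, C^\sigma$, and for each of the $c$ distinct colours in the loop there are at most $4$ (and at least $3$) admissible membership patterns, giving the factor $4^d$. Combined with the known radius $2/27$ of $\vec{\mathbf{S}}_g(z)$ from \cite{GGN3}, this yields $1/54$ immediately. That sorting construction is the missing idea in your write-up; without it (or a completed analytic estimate in its place) the lower bound is not proved.

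Your upper bound is also not the paper's, and as written it is incomplete, though it is closer to being repairable. The handle-insertion step ($\mon_g(\alpha,\beta)\geq\mon_0(\alpha,\beta)$ for $d\geq 2$ by prepending $(1\ 2)^{2g}$) is correct, but it only reduces the problem to showing that $\vec{\mathbf{F}}_0$ has radius at most $2/27$, and the cubic functional equation $\phi=2z(1+\phi)^3$ you invoke for $\vec{\mathbf{F}}_0$ is asserted rather than derived; the one explicit genus-zero formula you cite, $\frac{1}{d!}\mon_0(1^d,d)=\frac{1}{d^2}\binom{2d-2}{d-1}$, grows only like $4^d$ and so by itself gives no information near $2/27$. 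The paper needs none of this: since $\mon_g(1^d,1^d)$ is a single summand of $\sum_{\alpha,\beta\vdash d}\mon_g(\alpha,\beta)$, the coefficients of $\vec{\mathbf{F}}_g$ dominate those of $\vec{\mathbf{S}}_g$, whose radius is exactly $2/27$ by the hypergeometric evaluation in \cite{GGN3}; this gives the upper bound for every genus at once, with no reduction to $g=0$.
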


The proof of Theorem \ref{thm:Main} consists of two components.  

The first component,
which was established in \cite{GGN3} using the results of \cite{GGN1,GGN2}, is
concerns the generating function 

	\begin{equation*}
		\vec{\mathbf{S}}_g(z) = \sum_{d=1}^{\infty} \mon_g(1^d,1^d)\frac{z^d}{d!}
	\end{equation*}	
	
\noindent
for genus $g$ monotone \emph{simple} Hurwitz numbers $\mon_g(1^d,1^d)$, 
which count monotone transitive identity-based 
loops on the symmetric groups of length $r_g(1^d,1^d)$.  More precisely,
in \cite[Theorem 4.1]{GGN3}, we proved that $\vec{\mathbf{S}}_g(z)$ has 
radius of convergence $2/27$ by expressing it in terms of the 
Gauss hypergeometric function

	\begin{equation}
		\label{eqn:Hypergeometric}
		{}_2F_1\Big(\frac{2}{3},\frac{4}{3},\frac{3}{2};\frac{27}{2}z\Big).
	\end{equation}
	
\noindent
This yields an upper bound of $2/27$ on the radius of convergence of 
$\vec{\mathbf{F}}_g(z)$.

The second component of the proof of Theorem \ref{thm:Main},
which we establish in Proposition \ref{prop:Inequalities} of this paper, 
is the lower bound in Theorem \ref{thm:Main}.
This is obtained by showing that the terms of $\vec{\mathbf{F}}_g(z)$
are larger than those of $\vec{\mathbf{S}}_g(z)$ by at most an explicit
exponential factor:

	\begin{equation}
		\label{eqn:Inequality}
		\sum_{\alpha,\beta \vdash d} \mon_g(\alpha,\beta) \leq 4^d \mon_g(1^d,1^d).
	\end{equation}

\noindent
To prove the inequality \eqref{eqn:Inequality}, we make use of a
\emph{sorting action} of the symmetric group
$\group{S}(r)$ on $r$-step walks on $\group{S}(d)$ which is in fact a close relative
of Hurwitz's classical braid group action on branched coverings (see e.g.~\cite{EEHS} for 
an informative discussion of braids and branched coverings).  

Although the inequality \eqref{eqn:Inequality} is sufficient for our purposes,
it may not be sharp.  In fact, we conjecture that the radius of convergence of 
$\vec{\mathbf{F}}_g(z)$ 
is exactly $2/27$ for all $g \geq 0$, so that, by Pringsheim's theorem, these generating
functions have a common dominant singularity at the critical point $z_c=2/27$.  This 
would be consistent with various similarities between Hurwitz numbers, maps on 
surfaces, and matrix models, where common singular 
behaviour of generating functions across all genera
is an established phenomenon, see e.g.~\cite{OP1} and references therein.

Finally, we cannot resist mentioning the striking fact, pointed out to us by 
P. Di Francesco, that the peculiar number $2/27$ also occurs in the enumeration of finite 
groups.  Namely, given a prime $p$, let $\Gamma_p(N)$ 
denote the number of isomorphism classes of groups
of order $p^N$; it is known (see \cite{Pyber} and references therein) that 

	\begin{equation}
		\lim_{N \rightarrow \infty} \frac{1}{N^3} 
		\log\Gamma_p(N) = \frac{2}{27} \log p.
	\end{equation}
	
\noindent
Any explanation of this coincidence would doubtless be very interesting.

\section{Proof of Theorem \ref{thm:Main}}
\label{sec:Proof}

	\subsection{Swapping steps}
	Fix permutations $\rho,\sigma \in \group{S}(d)$ and a positive integer $r$,
	and consider the set $\Walk_r(\rho,\sigma)$ of all (not necessarily 
	monotone or transitive) $r$-step walks from $\rho$
	to $\sigma$ (assume $r$ is large enough so that this set is non-empty).
	Equivalently, $\Walk_r(\rho,\sigma)$ is the set of $r$-tuples
	
		\begin{equation}
			\label{eqn:Walk}
			W=((s_1\ t_1), \dots, (s_r\ t_r)), \quad s_i < t_i,
		\end{equation}
		
	\noindent
	of transpositions such that 
	
		\begin{equation}
			\sigma = \rho(s_1\ t_1) \dots (s_r\ t_r).
		\end{equation}
		
	\noindent
	Given a walk $W$ as in \eqref{eqn:Walk}, we refer to the list
	$(t_1,\dots,t_r)$ as the \emph{spectrum} of $W$, and the elements
	of this list as \emph{colours}.  Let 
	
		\begin{equation}
			\chi: \Walk_r(\rho,\sigma) \longrightarrow \{2,\dots,d\}^r
		\end{equation}
		
	\noindent
	be the map which sends a walk $W$ to its spectrum $\chi(W)$. 
	
	The symmetric group $\group{S}(r)$ acts naturally on spectra
	$\chi \in  \{2,\dots,d\}^r$ by permuting colours: for each $\pi \in 
	\group{S}(r)$ we have a corresponding operator
		
		\begin{equation*}
			S_\pi(t_1,\dots,t_r) = (t_{\pi(1)},\dots,t_{\pi(r)})
		\end{equation*}
		
	\noindent
	on spectra, and it is clear that $S_{\pi_1}S_{\pi_2}=S_{\pi_1\pi_2}$.
		
	One may also define an action of $\group{S}(r)$ on  
	$\Walk_r(\rho,\sigma)$.  To define this action, we begin by 
	introducing operators $R_i$, $1 \leq i \leq r-1$ which 
	act on walks as follows: we set
			
		\begin{equation*}
			R_i(s_1\ t_1) \dots (s_i\ t_i)(s_{i+1}\ t_{i+1}) \dots (s_r\ t_r)
			= (s_1\ t_1) \dots (*\ t_{i+1})(*\ t_i) \dots (s_r\ t_r),
		\end{equation*}
		
	\noindent
	where 
	
		\begin{equation*}
			(*\ t_{i+1}) = (s_i\ t_i)(s_{i+1}\ t_{i+1})(s_i\ t_i) \quad\text{ and }\quad
			(*\ t_i) = (s_i\ t_i)
		\end{equation*}
		
	\noindent 
	if $t_i < t_{i+1}$,
	
		\begin{equation*}
			(*\ t_{i+1}) = (s_{i+1}\ t_{i+1}) \quad \text{ and } \quad
			(*\ t_i) = (s_{i+1}\ t_{i+1})(s_i\ t_i)(s_{i+1}\ t_{i+1}) 
		\end{equation*}
		
	\noindent
	if $t_i > t_{i+1}$, and finally
	
		\begin{equation*}
			(*\ t_{i+1}) = (s_{i+1}\ t_{i+1}) \quad\text{ and }\quad 
			(*\ t_i) = (s_i\ t_i)
		\end{equation*}
		
	\noindent
	if $t_i=t_{i+1}$.  In words, the effect of $R_i$ is to swap
	the $i$ and $(i+1)$st steps of $W$ and then conjugate
	the step with the higher colour by the step with the lower colour; 
	if the two steps are the same colour, $R_i$ does nothing.
	
	\begin{prop}
		\label{prop:Coxeter}
		The operators $R_i$ satisfy the Coxeter relations:
		
			\begin{align*}
				R_i^2 &= I, \quad 1 \leq i \leq r-1 \\
				R_iR_{i+1}R_i &= R_{i+1}R_iR_{i+1}, 
					\quad 1 \leq i \leq r-2 \\
				R_iR_j &= R_jR_i, \quad 1 \leq i,j \leq r-1,\ |i-j| \geq 2.
			\end{align*}
		
		\noindent
		Moreover, if $W$ is transitive, so is $R_iW$.
	\end{prop}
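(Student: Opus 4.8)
The plan is to prove every relation by \emph{localising}: each operator $R_i$ alters only the steps in positions $i$ and $i+1$ of a walk, so each identity reduces to a statement about a short block of consecutive steps, which we may treat as an ordered tuple of colour-labelled transpositions. The conceptual key is that, restricted to such a block, $R_i$ coincides with the classical Hurwitz braid move $(a,b)\mapsto(aba,a)$ when $\operatorname{colour}(a)<\operatorname{colour}(b)$, with its inverse $(a,b)\mapsto(b,bab)$ when $\operatorname{colour}(a)>\operatorname{colour}(b)$, and with the identity when the two colours agree (recall $a,b$ are transpositions, so $a^{-1}=a$ and $b^{-1}=b$). Before verifying any relation I would first check that $R_i$ is a well-defined self-map of $\Walk_r(\rho,\sigma)$: conjugating a transposition $(s\ t)$ with $s<t$ by a transposition whose larger entry is strictly less than $t$ again yields a transposition having $t$ as its larger entry, so the output tuple still has the form \eqref{eqn:Walk}; and since $(s_i\ t_i)^2=\id$, one sees immediately that the product $\rho(s_1\ t_1)\cdots(s_r\ t_r)=\sigma$ is preserved. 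The same computation shows $\chi(R_iW)=S_{(i\ i+1)}\chi(W)$, so the $R_i$ act on spectra exactly as the Coxeter generators of $\group{S}(r)$.

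For $R_i^2=I$ the point is that a single application of $R_i$ interchanges the two colours at positions $i$ and $i+1$, so the second application conjugates in the opposite direction and reverses the first: for instance if $\operatorname{colour}(a)<\operatorname{colour}(b)$ then $R_i$ sends $(a,b)\mapsto(aba,a)$, and now position $i$ carries the larger colour, so $R_i$ sends $(aba,a)\mapsto(a,\,a(aba)a)=(a,b)$. The case $\operatorname{colour}(a)>\operatorname{colour}(b)$ is the mirror image and the equal-colour case is trivial. The far commutation $R_iR_j=R_jR_i$ for $|i-j|\geq 2$ needs no work, as the two operators act on disjoint pairs of positions.

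The substantial relation is the braid relation $R_iR_{i+1}R_i=R_{i+1}R_iR_{i+1}$, which involves only three consecutive positions, so it reduces to a direct computation on an ordered triple $(a,b,c)$ of colour-labelled transpositions. Both words act on the three colours by the longest element of $\group{S}(3)$, that is, by reversal of the triple, so the two outputs already have the same spectrum and only the three transpositions must be matched. To cut down the bookkeeping I would use the walk-reversal bijection $\operatorname{rev}\colon\Walk_r(\rho,\sigma)\to\Walk_r(\sigma,\rho)$, which conjugates $R_i$ into $R_{r-i}$; this identifies the two Coxeter words and pairs up the six strict orderings of $\{\operatorname{colour}(a),\operatorname{colour}(b),\operatorname{colour}(c)\}$ into three, while the orderings with ties collapse onto lower cases because an $R$-operator between equal colours is the identity. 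Each of the three remaining cases is a short computation in which the intermediate products telescope via $x^2=\id$ for a transposition $x$; e.g. when $\operatorname{colour}(a)<\operatorname{colour}(b)<\operatorname{colour}(c)$, the two words produce the triples $\bigl((aba)(aca)(aba),\,aba,\,a\bigr)$ and $\bigl(a(bcb)a,\,aba,\,a\bigr)$ respectively, which agree since $(aba)(aca)(aba)=ab(aa)c(aa)ba=abcba=a(bcb)a$. I expect this case analysis to be the main obstacle: although the ``Hurwitz move or its inverse'' description makes it conceptually transparent, the colour order can flip partway through a three-fold composition, so one must track carefully which of the two neighbouring steps is being conjugated at each stage.

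Finally, for the preservation of transitivity I would observe that $R_i$ replaces the adjacent steps $a,b$ by a pair $\{a',b'\}$ — one of $\{aba,a\}$, $\{b,bab\}$, or $\{a,b\}$ — which generates the same subgroup of $\group{S}(d)$ as $\{a,b\}$, since $aba$ is the conjugate of $b$ by $a$ so $\langle a,b\rangle=\langle a,aba\rangle$ and likewise $\langle a,b\rangle=\langle b,bab\rangle$; every other step, as well as the basepoint $\rho$, is left untouched. Hence the subgroup generated by the steps of $W$ together with its endpoints equals that generated by the steps of $R_iW$, and in particular $R_iW$ is transitive whenever $W$ is.
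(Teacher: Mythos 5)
Your proposal is correct and takes the same route as the paper, whose proof of this proposition is simply the assertion ``straightforward case-checking'': you carry out that case-checking explicitly, with the helpful organizational devices of localising to the affected positions and using the walk-reversal symmetry to halve the number of braid-relation cases. The one place you gloss (``orderings with ties collapse onto lower cases'') still requires its own short verification, since a tie kills only one of the two operators involved, but these cases are immediate.
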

	
	\begin{proof}
		Straightforward case-checking.
	\end{proof}
	
	\noindent
	In view of Proposition \ref{prop:Coxeter}, we may define a group homomorphism
	
		\begin{equation*}
			R:\group{S}(r) \longrightarrow \Aut \Walk_r(\rho,\sigma)
		\end{equation*}
		
	\noindent
	by mapping the Coxeter generator $(i\ i+1)$ to the operator $R_i$.
	We conclude that $(\Walk_r(\rho,\sigma),R)$ and $(\{2,\dots,d\}^r,S)$ are 
	permutation representations of $\group{S}(r)$, and that the diagram
	
		\begin{equation*}
		\begin{CD}
			\Walk_r(\rho,\sigma) @>\chi>> \{2,\dots,d\}^r \\
			@VV{R_\pi}V @VV{S_\pi}V \\
			\Walk_r(\rho,\sigma) @>\chi>> \{2,\dots,d\}^r
		\end{CD}	
		\end{equation*}	
		
	\noindent
	commutes for all $\pi \in \group{S}(r)$.
	
	\begin{prop}
		The restriction of $\chi$ to any single orbit of $(\Walk_r(\rho,\sigma),R)$ 
		is injective, with image the corresponding orbit of $(\{2,\dots,d\}^r,S)$.
	\end{prop}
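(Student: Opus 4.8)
The plan is to derive both assertions from the $\group{S}(r)$-equivariance of $\chi$ recorded by the commuting diagram, that is, from $\chi \circ R_\pi = S_\pi \circ \chi$. The statement about images is then immediate: the $R$-orbit of a walk $W$ is $\{R_\pi W : \pi \in \group{S}(r)\}$, so its image under $\chi$ is $\{S_\pi \chi(W) : \pi \in \group{S}(r)\}$, which is exactly the $S$-orbit of $\chi(W)$. Thus the only real content of the proposition is the injectivity of $\chi$ on each $R$-orbit.

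For injectivity, suppose $W$ and $W' = R_\pi W$ lie in a common $R$-orbit and satisfy $\chi(W) = \chi(W')$. Equivariance gives $S_\pi \chi(W) = \chi(R_\pi W) = \chi(W') = \chi(W)$, so $\pi$ lies in the stabilizer $\Stab_S(\chi(W))$ of the spectrum $\chi(W) = (t_1,\dots,t_r)$ under the colour-permuting action. This stabilizer is the Young subgroup $\prod_c \group{S}(I_c)$, where $I_c = \{i : t_i = c\}$ is the set of positions carrying colour $c$, and hence it is generated by the transpositions $(a\ b)$ with $t_a = t_b$. So it suffices to show that $R_{(a\ b)} W = W$ whenever $t_a = t_b$; granting this, $R_\pi W = W$, which forces $W' = W$.

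I would prove the claim $R_{(a\ b)} W = W$ (for $a < b$ with $t_a = t_b$) by induction on $b-a$. The base case $b - a = 1$ is immediate from the definition of $R_a$: an adjacent transposition of two steps of equal colour does nothing. For $b - a \geq 2$, I would use the factorization $(a\ b) = (a\ a+1)(a+1\ b)(a\ a+1)$ in $\group{S}(r)$ together with the homomorphism property established in Proposition \ref{prop:Coxeter}, which gives $R_{(a\ b)} = R_a\, R_{(a+1\ b)}\, R_a$. Put $W_1 = R_a W$. By the commuting diagram, $\chi(W_1)$ is obtained from $\chi(W)$ by transposing its $a$-th and $(a+1)$-st entries, so, since $b \geq a+2$, the spectrum of $W_1$ carries colour $t_a = t_b$ at both positions $a+1$ and $b$. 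The inductive hypothesis applied to the pair $(a+1, b)$ and the walk $W_1$ then yields $R_{(a+1\ b)} W_1 = W_1$, whence $R_{(a\ b)} W = R_a R_{(a+1\ b)} R_a W = R_a R_{(a+1\ b)} W_1 = R_a W_1 = R_a^2 W = W$.

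The one genuine obstacle here is that the colour classes $I_c$ need not be intervals in $\{1,\dots,r\}$, so the ``equal adjacent colours do nothing'' rule cannot be invoked directly for a general transposition in the Young subgroup; the conjugation identity $(a\ b) = (a\ a+1)(a+1\ b)(a\ a+1)$ together with $R_a^2 = I$ is precisely what circumvents this, by shuffling the intervening block of other colours out of the way and then restoring it. Everything else is routine bookkeeping with the equivariance and the Coxeter relations already in hand.
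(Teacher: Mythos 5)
Your argument is correct, but it takes a genuinely different route from the paper's. You prove injectivity pointwise: for an arbitrary walk $W$ in the orbit, you show that every generator $(a\ b)$ with $t_a=t_b$ of the spectrum-stabilizer fixes $W$ under $R$, handling the fact that equal-colour positions need not be adjacent via the conjugation identity $(a\ b)=(a\ a+1)(a+1\ b)(a\ a+1)$, the relation $R_a^2=I$, and an induction on $b-a$ (correctly stated uniformly over all walks, since the inductive hypothesis is applied to $R_aW$ rather than to $W$). The paper instead sidesteps this entire difficulty by choosing a well-placed base point: it looks at the walk in the orbit whose spectrum is \emph{sorted}, where the colour classes are intervals and the stabilizing Young subgroup is generated by \emph{adjacent} transpositions $(i\ i+1)$ with $t_i=t_{i+1}$, each of which fixes the walk by the very definition of $R_i$; combined with the reverse containment from equivariance, this identifies the two stabilizers at that point, and the orbit--stabilizer theorem then shows the two orbits have equal size, so the equivariant surjection between them is a bijection. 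The paper's counting argument is shorter and avoids any induction; your argument is more computational but establishes the slightly sharper statement $\Stab_R(W)=\Stab_S(\chi(W))$ at every point of the orbit directly (which in the paper's version is only immediate at the sorted walk, the rest following by conjugacy of stabilizers). Both are complete proofs; there is no gap in yours.
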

	
	\begin{proof}
		Since $\chi$ respects the action of $\group{S}(r)$, it sends each orbit of $(\Walk_r(\rho,\sigma),R)$ to an orbit of $(\{2,\dots,d\}^r,S)$; the question is whether the image orbit is smaller than the source orbit, and we can answer it by looking at stabilizers.
		
		Consider the element of the image orbit which is sorted, that is, the spectrum $(t_1, \dots, t_r)$ such that $t_1 \leq \dots \leq t_r$. If the colour 2 appears $m_2$ times in the spectrum, the colour 3 appears $m_3$ times in the spectrum, and so on up to $m_d$, then the stabilizer of the spectrum is the Young subgroup
		
		\begin{equation}
			\label{eqn:stabilizer}
			\group{S}(m_2) \times \group{S}(m_3) \times \dots \times \group{S}(m_d) \subseteq \group{S}(r).
		\end{equation}
		
		\noindent
		This subgroup is generated by the transpositions $(i\ i+1)$ which exchange two adjacent copies of the same colour in the spectrum.
		
		Now, consider a walk
		
		\begin{equation*}
			W=((s_1\ t_1), \dots, (s_r\ t_r))
		\end{equation*}
		
		\noindent
		with spectrum $(t_1, \dots, t_r)$. By definition, we have $R_i W = W$ when $t_i = t_{i+1}$, so the stabilizer of $W$ contains the Young subgroup \eqref{eqn:stabilizer}.
		Since $\chi$ is well-defined, the stabilizer of $W$ can be no bigger, so in fact it is exactly this Young subgroup.
		By the orbit-stabilizer theorem, the two orbits have the same size, so $\chi$ is a bijective correspondence between them, as needed.
		
		As a further consequence, note that each orbit of $(\Walk_r(\rho,\sigma),R)$ contains a unique monotone walk, since each orbit of $(\{2,\dots,d\}^r,S)$ contains a unique sorted spectrum.
	\end{proof}
	
	\subsection{Sorting colours}
	For any $c \in \{1,\dots,d-1\}$, define $\mon_g(\alpha,\beta;c)$ to be
	the number of walks counted by $\mon_g(\alpha,\beta)$ whose spectrum
	contains exactly $c$ distinct colours. 
	
	\begin{prop}
		\label{prop:Inequalities}
		For any $d \geq 1$ and $g \geq 0$, we have
		
			\begin{equation*}
				\sum_{c=1}^{d-1} 3^c \mon_g(1^d,1^d;c)
				\leq \sum_{\alpha,\beta \vdash d} \mon_g(\alpha,\beta)
				\leq \sum_{c=1}^{d-1} 4^c \mon_g(1^d,1^d;c).
			\end{equation*}
	\end{prop}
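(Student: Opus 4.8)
The plan is to use the sorting action of $\group{S}(r)$ on walks to relate arbitrary double monotone Hurwitz numbers to simple ones, tracking the number of distinct colours, and then to sum over that parameter. The key observation is that the sorting action is colour-preserving on spectra, so it partitions $\Walk_r(\rho,\sigma)$ into orbits, each containing exactly one monotone walk, and each orbit has size equal to the index of a Young subgroup determined by the colour multiplicities of its (common) spectrum. Concretely, for $\rho = \sigma = \id$ and $r = r_g(1^d,1^d)$, the monotone transitive walks in $\Walk_r(\id,\id)$ are exactly those counted by $\mon_g(1^d,1^d)$; an orbit whose spectrum uses colour multiplicities $m_2,\dots,m_d$ (with $\sum m_j = r$ and exactly $c$ of the $m_j$ nonzero) contains $\binom{r}{m_2,\dots,m_d}$ walks, all transitive (by the last sentence of Proposition \ref{prop:Coxeter}). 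The same action applies with $\rho,\sigma$ of arbitrary cycle type $\alpha,\beta$, but here one must be careful: the relevant monotone walks from a permutation of type $\alpha$ to one of type $\beta$ are counted by $\mon_g(\alpha,\beta)$ only after accounting for the Hurwitz normalisation $\frac{1}{d!}$ versus the walk count, and the step length $r$ must match $r_g(\alpha,\beta) = 2g - 2 + \ell(\alpha) + \ell(\beta)$, which depends on $\alpha,\beta$ and is generally \emph{smaller} than $r_g(1^d,1^d)$.

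First I would set up the bijection between monotone transitive walks and their sorting orbits, and express each side of the desired inequality as a sum over colour-composition data. For the left-hand inequality $\sum_c 3^c \mon_g(1^d,1^d;c) \le \sum_{\alpha,\beta}\mon_g(\alpha,\beta)$, the idea is to take a monotone transitive loop $W$ at $\id$ with exactly $c$ distinct colours and produce from it many walks between permutations of various cycle types: for instance, by choosing where to cut the walk, or by relabelling which colours appear, one obtains at least $3^c$ distinct walks (counted with Hurwitz weight) whose endpoints have cycle types $\alpha,\beta \vdash d$ and which together are still transitive. More precisely, a monotone walk with $c$ distinct colours $t^{(1)} < \dots < t^{(c)}$ and multiplicities $m^{(1)},\dots,m^{(c)}$ can be associated, for each of the $c$ colour-blocks, with one of roughly three structural choices (keep the block, truncate it, or merge it), yielding the factor $3^c$; I would need to check this map is injective and lands in $\bigsqcup_{\alpha,\beta}\mon_g(\alpha,\beta)$. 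For the right-hand inequality, conversely, I would start from an arbitrary walk counted by $\mon_g(\alpha,\beta)$, sort its colours to get a monotone walk at some pair of basepoints, then further modify it to a monotone \emph{loop} at $\id$ of the correct length $r_g(1^d,1^d)$ by inserting at most a controlled number of extra steps per distinct colour; the number of such loops producing a fixed target is at most $4^c$, where $c$ is the number of distinct colours.

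The main obstacle I expect is the bookkeeping around the step-length mismatch: $r_g(\alpha,\beta)$ varies with $\alpha,\beta$, so the correspondences above cannot be pure bijections between equinumerous sets but must trade cycle-merging/splitting (which changes $\ell(\alpha) + \ell(\beta)$, hence $r$) against colour-count changes (which change how many distinct colours are available) while keeping the genus $g$ fixed. Getting the constants $3$ and $4$ exactly right — rather than some larger base — will require a careful accounting of how many ways a single colour-block can be inserted or removed consistently with the monotonicity constraint and the transitivity constraint, and this is where I would spend most of the effort. A secondary subtlety is transitivity: one must verify that the modified walks remain transitive (for the lower bound) and that every walk counted by $\mon_g(\alpha,\beta)$ arises from a transitive loop at $\id$ (for the upper bound); here Proposition \ref{prop:Coxeter}'s assertion that $R_i$ preserves transitivity does the work for the sorting step, but the colour-insertion/removal steps need their own short transitivity check.
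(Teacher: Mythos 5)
Your proposal correctly identifies the sorting action and the shape of the answer (a per-colour factor between $3$ and $4$ relating $\sum_{\alpha,\beta}\mon_g(\alpha,\beta)$ to the refined counts $\mon_g(1^d,1^d;c)$), but it is missing the one idea that makes the argument work, and the obstacle you flag as your main difficulty --- the mismatch between $r_g(\alpha,\beta)$ and $r_g(1^d,1^d)$ --- is precisely what that idea resolves. The paper invokes the Jucys/Diaconis--Greene fact that every permutation $\rho$ of cycle type $\alpha$ is the endpoint of a \emph{unique} strictly monotone walk from $\id$, of length $r_0(\alpha)=d-\ell(\alpha)$. Prepending this canonical walk to a walk counted by $\mon_g(\alpha,\beta)$, and appending the reverse of the analogous canonical walk to $\sigma$, produces a transitive identity-based loop of length $r_0(\alpha)+r_g(\alpha,\beta)+r_0(\beta)=2g-2+2d=r_g(1^d,1^d)$, which is independent of $\alpha$ and $\beta$: the length bookkeeping takes care of itself. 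Sorting this loop with the $R_i$ action then yields a monotone loop counted by $\mon_g(1^d,1^d)$, and the whole procedure is reversible given the sorted loop together with the two subsets $C^\rho,C^\sigma$ of colours used in the prepended and appended segments.

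Without this canonical closure, neither of your two proposed mechanisms gets off the ground. The factors $3$ and $4$ are not ``keep/truncate/merge a colour block'' or ``numbers of ways to insert extra steps''; they count, for each of the $c$ distinct colours of the sorted loop, the four possible membership patterns in the pair $(C^\rho,C^\sigma)$ --- in neither, in $C^\rho$ only, in $C^\sigma$ only, in both --- of which the first three always yield a valid preimage and the fourth does so only when the colour occurs at least twice in the spectrum. Hence each loop with $c$ colours has between $3^c$ and $4^c$ preimages, and both inequalities fall out of this single injection rather than from two separate constructions as you propose. As written, your text is a plan for a proof whose central lemma is still to be found, so it does not yet establish the proposition.
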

	
	\begin{proof}
		The monotone double Hurwitz number $\mon_g(\alpha,\beta)$
		counts monotone transitive walks 
		
			\begin{equation}
				\label{eqn:MonotoneWalk}
				\sigma = \rho(s_1\ t_1) \dots (s_{r_g(\alpha,\beta)}\ t_{r_g(\alpha,\beta)}),
				\quad t_1< \dots < t_{r_g(\alpha,\beta)}
			\end{equation}
			
		\noindent
		on $\group{S}(d)$
		which begin at a permutation $\rho$ of cycle type $\alpha$, 
		end at a permutation $\sigma$ of cycle type $\beta$, and take 
		a total of $r_g(\alpha,\beta)=2g-2+\ell(\alpha)+\ell(\beta)$ steps.  Given such 
		a walk, there is a unique \emph{strictly} monotone walk
		
			\begin{equation*}
				\rho = \id (s_1^\rho\ t_1^\rho) \dots (s_{r_0(\alpha)}^\rho\ t_{r_0(\alpha)}^\rho),
				\quad t_1^\rho <  \dots < t_{r_0(\alpha)}^\rho,
			\end{equation*}
			
		\noindent
		from the identity permutation to $\rho$,
		and this walk is of length $r_0(\alpha) = d- \ell(\alpha)$.  
		This fact, in different language, is due independently to 
		Jucys \cite{Jucys}, and Diaconis and Greene \cite{DG}; see \cite{MN}
		for a proof.
		Similarly, there is a unique strictly monotone walk
		
			\begin{equation*}
				\sigma = 
				\id (s_1^\sigma\ t_1^\sigma) \dots (s_{r_0(\beta)}^\sigma\ t_{r_0(\beta)}^\sigma),
				\quad t_1^\sigma <  \dots < t_{r_0(\beta)}^\sigma,
			\end{equation*}
		
		\noindent	
		from the identity to $\sigma$, and this walk has length 
		$r_0(\beta)=d-\ell(\beta)$.  Thus the walk \eqref{eqn:MonotoneWalk}
		gives rise to a transitive, identity based loop
		
			\begin{equation}
				\label{eqn:TransitiveLoop}
				\id = \id(s_1^\rho\ t_1^\rho) \dots (s_{r_0(\alpha)}^\rho\ t_{r_0(\alpha)}^\rho)
				(s_1\ t_1) \dots (s_{r_g(\alpha,\beta)}\ t_{r_g(\alpha,\beta)})
				(s_{r_0(\beta)}^\sigma\ t_{r_0(\beta)}^\sigma) \dots (s_1^\sigma\ t_1^\sigma)
			\end{equation}
			
		\noindent
		of length
						
			\begin{equation*}
				r_0(\alpha)+r_g(\alpha,\beta)+r_0(\beta) = 2g-2+2d=r_g(1^d,1^d),
			\end{equation*}
		
		\noindent
		which is thus a transitive loop of genus $g$.
		This loop, however, is not monotone; its spectrum
		
			\begin{equation*}
				(t_1^\rho,\dots,t_{r_0(\alpha)}^\rho,
			 	t_1,\dots, t_{r_g(\alpha,\beta)},
				t_{r_0(\beta)}^\sigma,\dots, t_1^\sigma) 
			\end{equation*}
			
		\noindent
		satisfies the inequalities
		
			\begin{align*}
				&t_1^\rho < \dots < t_{r_0(\alpha)}^\rho \\
				&t_1 \leq \dots \leq t_{r_g(\alpha,\beta)} \\
				&t_{r_0(\beta)}^\sigma > \dots > t_1^\sigma.
			\end{align*}
			
		Let $C^\rho = \{t_1^\rho, \dots, t_{r_0(\alpha)}^\rho\}$ be the set
		of colours which appear in the first part of the spectrum of the 
		transitive genus $g$ loop \eqref{eqn:TransitiveLoop}, and similarly let
		$C^\sigma = \{t_1^\sigma, \dots, t_{r_0(\beta)}^\sigma\}$ be the set
		of colours which appear in the last part of the spectrum of this loop.
		Let us use the sorting action of $\group{S}(r_g(1^d,1^d))$ 
		to rearrange the steps of \eqref{eqn:TransitiveLoop}
		in weakly increasing order:
		
			\begin{equation}
				\label{eqn:SortedLoop}
				\id = \id(s'_1\ t'_1) \dots
				(s'_{r_g(1^d,1^d))}\ t'_{r_g(1^d,1^d)}), \quad
				t'_1 \leq \dots \leq t'_{(r_g(1^d,1^d))}.
			\end{equation}
			
		\noindent
		So far, this procedure is reversible --- given the sorted loop 
		\eqref{eqn:SortedLoop} and the sets $C^\rho$ and 
		$C^\sigma$, we can reconstruct the transitive loop \eqref{eqn:TransitiveLoop},
		and hence the original walk \eqref{eqn:MonotoneWalk}.
		Thus, to establish the stated inequalities, it 
		suffices to show that, for every identity-based loop of length $2g-2+2d$
		with $c$ colours in its spectrum, there are between $3^c$ and $4^c$ pairs
		of subsets $C^\rho, C^\sigma$ for which the reverse construction succeeds.
		Consider the possibilities for each of the $c$ colours in the spectrum:
		\begin{enumerate}
			\item the colour appears in neither $C^\rho$ nor $C^\sigma$;
			\item the colour appears in $C^\rho$ but not $C^\sigma$;
			\item the colour does not appear in $C^\rho$ but appears in $C^\sigma$; or
			\item the colour appears in both $C^\rho$ and $C^\sigma$.
		\end{enumerate}
		Possibilities (1--3) are always fine for the reverse construction, whereas
		option (4) only works when the colour appears at least twice in the spectrum.
		Thus, there is an independent choice of at least 3 and at most 4 
		options for each of the $c$
		colours, resulting in between $3^c$ and $4^c$ valid pairs $C^\rho, C^\sigma$.
	\end{proof}
	
	Proposition \ref{prop:Inequalities} establishes the inequality \eqref{eqn:Inequality},
	and thus completes the proof of Theorem \ref{thm:Main}.
	
\bibliographystyle{amsplain}

\end{document}